\numberwithin{equation}{section}
\newtheorem{thm}{Theorem}[section]
\newtheorem{lemma}{Lemma}
\newtheorem{cor}[thm]{Corollary}
\title{Evaluation of weighted Fibonacci sums of a certain type\thanks{AMS Classification: 11B37, 11B39, 65B10}}
\author[]{Kunle Adegoke \thanks{adegoke00@gmail.com}}
\affil{Department of Physics and Engineering Physics, \mbox{Obafemi Awolowo University}, 220005 Ile-Ife, Nigeria}
\begin{document}

\date{}

\maketitle

\begin{abstract}
\noindent  We derive a formula for the evaluation of weighted generalized Fibonacci sums of the type $S_k^n (w,r) = \sum_{j = 0}^k {w^j j^r G_j{}^n }$. Several explicit evaluations are presented as examples. 
\end{abstract}

\section{Introduction}
The generalized Fibonacci numbers $G_i$, $i \ge 0$, are defined through the second order recurrence relation $G_{i+1} = G_i + G_{i-1}$, with arbitrary seeds $G_0$ and $G_1$. In particular, when $G_0 = 0$ and $G_1= 1$, we have the Fibonacci numbers, denoted $F_i$, and when $G_0= 2$ and $G_1= 1$, we have the Lucas numbers, $L_i$. Extension to negative index is provided through \mbox{$G_{-i}=(-1)^i(F_{i+1}G_0-F_iG_1)$}, Vajda~\cite{vajda}, Formula~(9).

\bigskip

In this paper we will derive a formula (Theorem~\ref{thm.main}) for the evaluation of weighted generalized Fibonacci sums of the type
\begin{equation}\label{eq.z5p75mn}
S_k^n (w,r) = \sum_{j = 0}^k {w^j j^r G_j{}^n }\,,
\end{equation}
where $w$ is a real or complex parameter and $r$, $k$ and $n$ are nonnegative integers. For brevity, $S_k^1 (w,r)$ will be denoted $S_k(w,r)$ and $S_k^1 (1,r)$ will often be denoted simply by $S_k(r)$.  Similarly, we will write $\overline S_k(r)$ for $S_k^1 (-1,r)$. The evaluation of \mbox{$S_k (r)=\sum_{j=0}^k{j^rG_j}$} has received some attention in the literature, see for example references~\cite{koshy01,koshy,clarke,gauthier95,ozeki05}.

\bigskip

Since $S_k^n(w,0)$ in~\eqref{eq.z5p75mn} is to give $ \sum_{j = 0}^k {w^j G_j{}^n }$, we shall adopt the convention that $j^0=1$ for all $j$, including $j=0$.

\bigskip

For evaluating sums with $a>b$ we shall use
\[
\sum_{j=a}^b{f_j}\equiv-\sum_{j=b+1}^{a-1}{f_j}\,.
\]
In particular \[\sum_{j=a+1}^{a-1}{f_j}=-f_a\mbox{ and }\sum_{j=a}^{a-1}{f_j}=0\,.\]

\bigskip

Following Gauthier~\cite{gauthier89}, we introduce the differential operator 
\begin{equation}
D=w\frac {\rm d}{{\rm d}w}\,,
\end{equation}
and note the following properties:
\begin{enumerate}
\item $D^0u(w)=u(w)$.
\item $D(au(w)+bv(w))=aD(u(w))+bD(v(w))$, for complex numbers $a$ and $b$.
\item\label{it.kor1u1b}  $D^p \left( {u(w)v(w)} \right) = \sum_{r = 0}^p {\binom pr D^r u(w)D^{p - r} v(w)} $ for $p$~times differentiable functions $u(w)$ and $v(w)$.
\item\label{it.ffasedb} If $u(w) = \sum_{r = 0}^n {a_r w^r } $, where the coefficients $a_r$ are real or complex numbers and $n$ is an integer, then, $D^p u(w) = a_0 \delta _{p0}  + \sum_{r = 1}^n {r^p a_r w^r } $, where $\delta_{ij}$ is the usual Kronecker delta.
\item\label{it.xvq5fhg} $D^p S_k^n (w,0) = S_k^n (w,p)$\,.

\end{enumerate}
The strength of our analysis lies in the properties~\ref{it.kor1u1b}, \ref{it.ffasedb} and \ref{it.xvq5fhg} which are easily verified by induction on $p$.

\bigskip

Here is a couple of anticipated evaluations to whet the reader's appetite for reading on.
\[
\begin{split}
\sum_{j = 1}^k {j^5 G_j }&= 2671G_0  + 4322G_1  + (k^5  - 5k^4  + 50k^3  - 310k^2  + 1285k - 2671)G_k\\
&\qquad+ (k^5  - 10k^4  + 80k^3  - 500k^2  + 2080k - 4322)G_{k + 1}\,,
\end{split}
\]
\[
\sum_{j = 1}^k {( - 1)^j jG_j }  = 3G_0  - 2G_1  - ( - 1)^k (k + 3)G_k  + ( - 1)^k (k + 2)G_{k + 1}\,,
\]
\[
\sum_{j = 1}^k {\frac{{jG_j }}{{2^j }}}  = 6G_0  + 10G_1  - (k + 6)\frac{{G_k }}{{2^k }} - (k + 5)\frac{{G_{k + 1} }}{{2^{k-1} }}\,,
\]
\[
\sum_{j = 1}^k {2^j jG_j }  = \frac{2}{5}G_1  + \frac{{2^{k + 2} }}{5}kG_k  + \frac{{2^{k + 1} }}{5}(k - 1)G_{k + 1}\,,
\]
\[
\sum_{j = 1}^k {( - 1)^j G_{2j} }  = \frac{3}{5}G_0  - \frac{1}{5}G_1  + ( - 1)^k \frac{2}{5}G_{2k}  + ( - 1)^k \frac{1}{5}G_{2k + 1}\,,
\]
\[
\sum_{j = 1}^k {( - 1)^{j - 1} G_{2j - 1} }  =  - \frac{1}{5}G_0  + \frac{2}{5}G_1  + ( - 1)^k \frac{1}{5}G_{2k}  - ( - 1)^k \frac{2}{5}G_{2k + 1}\,.
\]
\[
\sum_{j = 0}^\infty {\frac{{G_j^2 }}{{2^{4j} }}}  = \frac{{3552}}{{3553}}G_0^2  + \frac{{224}}{{3553}}G_1^2  + \frac{{16}}{{3553}}G_2^2\,,
\]
\[
\begin{split}
\sum_{j = 0}^k {( - 1)^j G_{2j}^3 }  &= \frac{{23}}{{25}}G_0^3  - \frac{9}{{50}}G_1^3  - \frac{{13}}{{50}}G_2^3  + \frac{3}{{50}}G_3^3  - \frac{3}{{50}}( - 1)^k G_{2k + 1}^3\\
&\qquad+ \frac{{37}}{{50}}( - 1)^k G_{2k + 2}^3  + \frac{9}{{50}}( - 1)^k G_{2k + 3}^3  - \frac{2}{{25}}( - 1)^k G_{2k + 4}^3\,.
\end{split}
\]

\section{Evaluation of $S_k (w,r)=\sum_{j = 0}^k {w^jj^r G_j } $}
\begin{lemma}\label{lem.fxf1joy}
If $k$ is a positive integer and $w$ is a parameter, real or complex, then
\[
S_k (w,0) = \sum_{j = 0}^k {w^j G_j }  = \frac{{-(w - 1)G_0  + wG_1  - w^{k + 2} G_k  - w^{k + 1} G_{k + 1} }}{{1-w-w^2}}\,.
\]

\end{lemma}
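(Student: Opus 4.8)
The plan is to exploit the fact that $1-w-w^2$ is precisely the reversed characteristic polynomial of the Fibonacci recurrence, so that multiplying the sum by this factor should collapse the bulk of the terms via $G_j=G_{j-1}+G_{j-2}$, leaving only boundary contributions. Concretely, I would set $S:=S_k(w,0)=\sum_{j=0}^k w^jG_j$ and study the product $T:=(1-w-w^2)S$. Writing $T=S-wS-w^2S$ and distributing $w$ and $w^2$ into the summand produces three sums whose $G$-indices are shifted by $0$, $1$, and $2$ respectively.

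The key step is a reindexing: after the substitutions the three pieces become $\sum_{j=0}^{k}w^jG_j$, $\sum_{j=1}^{k+1}w^jG_{j-1}$, and $\sum_{j=2}^{k+2}w^jG_{j-2}$, so that all three are now expressed as coefficients of a common power $w^j$. For every interior index $2\le j\le k$ the coefficient is $G_j-G_{j-1}-G_{j-2}$, which vanishes identically by the defining recurrence. Hence $T$ reduces to the surviving boundary terms at $j=0,1$ (from the low ends of the first two sums) and at $j=k+1,k+2$ (from the high ends of the last two sums).

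I would then assemble these four boundary terms explicitly: the low end contributes $G_0+w(G_1-G_0)$, while the high end contributes $-w^{k+1}(G_k+G_{k-1})-w^{k+2}G_k$. Applying the recurrence once more in the form $G_k+G_{k-1}=G_{k+1}$ simplifies the penultimate term to $-w^{k+1}G_{k+1}$, giving $T=(1-w)G_0+wG_1-w^{k+2}G_k-w^{k+1}G_{k+1}$. Dividing by $1-w-w^2$ (legitimate wherever this factor is nonzero) yields exactly the claimed expression, since $(1-w)G_0=-(w-1)G_0$.

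The main obstacle is purely the bookkeeping at the four boundaries: one must track which of the three reindexed sums actually contributes at each extremal value of $j$ and confirm that no interior index is accidentally dropped or double-counted. A safeguard is to verify the final formula against small cases (for instance $k=1$), and indeed an entirely independent route by induction on $k$ is available as a cross-check, using $S_k(w,0)=S_{k-1}(w,0)+w^kG_k$ together with the same recurrence to reconcile the boundary terms.
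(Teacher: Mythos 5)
Your proof is correct and is essentially the paper's own argument run in reverse: the paper multiplies the recurrence $G_j=G_{j-1}+G_{j-2}$ by $w^j$, sums over $j$, and solves for $S_k(w,0)$, which is the same telescoping computation as your expansion of $(1-w-w^2)S_k(w,0)$, and both yield the numerator $(1-w)G_0+wG_1-w^{k+2}G_k-w^{k+1}G_{k+1}$. The only cosmetic difference is that your bookkeeping keeps all indices nonnegative, whereas the paper's reindexing introduces $G_{-1}$ and $G_{-2}$ via the negative-index extension.
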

In particular, we have
\[
S_k (1,0) = \sum_{j = 0}^k {G_j }  = G_{k + 2}  - G_1\,.
\]
\begin{proof}
Multiply through the recurrence relation $G_j=G_{j-1}+G_{j-2}$ by $w^j$ and sum over $j$, obtaining
\[
\begin{split}
\sum_{j = 0}^k {w^j G_j }  &= \sum_{j = 0}^k {w^j G_{j - 1} }  + \sum_{j = 0}^k {w^j G_{j - 2} }\\
&= w\sum_{j =  - 1}^{k - 1} {w^j G_j }  + w^2 \sum_{j =  - 2}^{k - 2} {w^j G_j } 
\end{split}
\]
\[
\begin{split}
&\quad\qquad\qquad\qquad\qquad\qquad= w\left( {\frac{{G_{ - 1} }}{w} + \sum_{j =  0}^{k} {w^j G_j }  - w^k G_k } \right)\\
&\qquad\qquad\qquad\qquad\qquad\qquad + w^2 \left( {\frac{{G_{ - 2} }}{{w^2 }} + \frac{{G_{ - 1} }}{w} + \sum_{j = 0}^k {w^j G_j }  - w^{k - 1} G_{k - 1}  - w^k G_k } \right)\,,
\end{split}
\]
so that we have
\[
\begin{split}
S_k (w,0) &= G_{ - 1}  + wS_k (w,0) - w^{k + 1} G_k\\ 
&+ G_{ - 2}  + wG_{ - 1}  + w^2 S_k (w,0)\\
&\quad - w^{k + 1} G_{k - 1}  - w^{k + 2} G_k\,,
\end{split}
\]
and the result follows.

\end{proof}

Note that in the identity of Lemma~\ref{lem.fxf1joy}, if we let $k$ approach infinity and require $w^kG_k$ to vanish as $k$ approaches infinity, then we obtain the generating function for $G_i$, namely,
\begin{equation}
S_\infty  (w,0) = \sum_{j = 0}^\infty  {w^j G_j }  = \frac{{(1-w)G_0  + wG_1}}{{1 - w - w^2 }}\,.
\end{equation}
\begin{lemma}\label{lem.piatqhb}
The rational function 
\[
A(w;m) = D^m \frac{1}{{1-w-w^2}}=D^mA(w;0)\,,
\]
satisfies the following recursion relation
\[
({1-w-w^2})A(w;m) =\delta_{m0}  + w\sum_{j = 0}^{m - 1} {\binom mj(2^{m - j} w + 1)A(w;j)},\quad m=0,1,2,\ldots\,.
\]

\end{lemma}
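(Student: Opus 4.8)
The plan is to exploit the defining relation $A(w;0)=1/(1-w-w^2)$ in its product form $(1-w-w^2)A(w;0)=1$ and then apply the operator $D^m$ to both sides. The right-hand side is immediate: taking $u(w)=1$ in property~\ref{it.ffasedb} (with $a_0=1$ and no higher coefficients, so the sum is empty) gives $D^m 1 = \delta_{m0}$. Hence the entire content of the lemma reduces to expanding the left-hand side $D^m\big((1-w-w^2)A(w;0)\big)$ by the Leibniz-type rule of property~\ref{it.kor1u1b}.

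First I would set $u(w)=1-w-w^2$ and $v(w)=A(w;0)$, so that property~\ref{it.kor1u1b} yields
\[
D^m\big(u(w)v(w)\big)=\sum_{r=0}^m\binom{m}{r}D^r u(w)\,D^{m-r}v(w)=\sum_{r=0}^m\binom{m}{r}D^r u(w)\,A(w;m-r),
\]
where I have used $D^{m-r}A(w;0)=A(w;m-r)$ directly from the definition of $A(w;m)$. Next I would evaluate $D^r u(w)$ from property~\ref{it.ffasedb} applied to the polynomial $u(w)=1-w-w^2$ with coefficients $a_0=1$, $a_1=a_2=-1$; this gives $D^r u(w)=\delta_{r0}-w-2^r w^2$, so that $D^0u(w)=1-w-w^2$ while $D^r u(w)=-(w+2^r w^2)$ for every $r\ge 1$.

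Splitting off the $r=0$ term of the sum then isolates $(1-w-w^2)A(w;m)$ on its own, with the remaining terms ($1\le r\le m$) each carrying the factor $-(w+2^r w^2)$. Equating the whole expression to $\delta_{m0}$, transposing those remaining terms to the right (which flips their sign), substituting the explicit $D^r u$, and reindexing by $j=m-r$ — so that $r=m-j$ runs over $0\le j\le m-1$ — gives, after using $\binom{m}{m-j}=\binom{m}{j}$ and factoring out a single $w$, exactly the stated recursion. Note that the case $m=0$ is automatically covered, since then the Leibniz sum has only the $r=0$ term and the claimed sum is empty.

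I do not anticipate any genuine obstacle: the argument is a direct application of properties~\ref{it.kor1u1b} and \ref{it.ffasedb}. The only points requiring care are the sign change incurred when the $r\ge 1$ terms are moved to the other side, and the bookkeeping in the reindexing $j=m-r$ together with the symmetry of the binomial coefficient.
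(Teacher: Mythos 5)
Your proposal is correct and follows exactly the paper's own (very terse) proof: write $(1-w-w^2)A(w;0)=1$, apply $D^m$ via the Leibniz rule of property~\ref{it.kor1u1b}, evaluate $D^r(1-w-w^2)=\delta_{r0}-w-2^rw^2$ by property~\ref{it.ffasedb}, isolate the $r=0$ term, and reindex with $j=m-r$. Your computation checks out, including the sign flip and the binomial symmetry, so nothing further is needed.
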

\begin{proof}
Write $(1-w-w^2)A(w;0)=1$ and apply $D^m$ to both sides.
\end{proof}
\begin{thm}\label{thm.lou2j9u}
If $k$ and $r$ are nonnegative integers and $w$ a real or complex parameter, then
\[
\begin{split}
S_k (w,r) = \sum_{j = 0}^k {w^j j^r G_j }&=  - G_0 \sum_{m = 0}^r {\binom rm(w-\delta _{rm})A(w;m)}  + wG_1 \sum_{m = 0}^r {\binom rmA(w;m)}\\ 
&\quad- w^{k + 2} G_k \sum_{m = 0}^r {\binom rm(k + 2)^m A(w;r - m)}\\ 
&\quad- w^{k + 1} G_{k + 1} \sum_{m = 0}^r {\binom rm(k + 1)^m A(w;r - m)}\,,
\end{split}
\]
where the $A(w;p)$, $p=0,1,2,\ldots,r$, are given recursively by
\[
({1-w-w^2})A(w;p) =\delta_{p0}  + w\sum_{j = 0}^{p - 1} {\binom pj(2^{p - j} w + 1)A(w;j)}\,.
\]

\end{thm}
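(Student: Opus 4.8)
The plan is to exploit the single identity that underlies the whole construction, namely $S_k(w,r) = D^r S_k(w,0)$, which is property~\ref{it.xvq5fhg} specialized to $n=1$. Since Lemma~\ref{lem.fxf1joy} already supplies $S_k(w,0)$ in closed form, the entire theorem reduces to applying the operator $D^r$ to that closed form and tidying up the result. Writing $A(w;0)=1/(1-w-w^2)$, I would first record
\[
S_k(w,0) = \Bigl[-(w-1)G_0 + wG_1 - w^{k+2}G_k - w^{k+1}G_{k+1}\Bigr]A(w;0)\,,
\]
so that $S_k(w,0)$ is displayed as a product of a numerator polynomial in $w$ (with $k$ as a parameter) and the rational function $A(w;0)$.

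Next I would apply $D^r$ to this product and invoke the Leibniz-type rule, property~\ref{it.kor1u1b}, to distribute the operator:
\[
S_k(w,r) = \sum_{m=0}^r \binom{r}{m}\, D^m\!\Bigl[-(w-1)G_0 + wG_1 - w^{k+2}G_k - w^{k+1}G_{k+1}\Bigr]\, A(w;r-m)\,,
\]
where $D^{r-m}A(w;0)=A(w;r-m)$ by the very definition of $A$ in Lemma~\ref{lem.piatqhb}. Each monomial in the numerator is then differentiated by property~\ref{it.ffasedb}: the term $wG_1$ gives $D^m(wG_1)=G_1 w$, the term $-w^{k+2}G_k$ gives $-G_k(k+2)^m w^{k+2}$, and $-w^{k+1}G_{k+1}$ gives $-G_{k+1}(k+1)^m w^{k+1}$ (here the $\delta_{m0}$ of property~\ref{it.ffasedb} is absent because these are pure monomials with no constant term). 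The boundary terms therefore land directly in the stated shape, contributing $-w^{k+2}G_k\sum_{m=0}^r \binom{r}{m}(k+2)^m A(w;r-m)$ and $-w^{k+1}G_{k+1}\sum_{m=0}^r \binom{r}{m}(k+1)^m A(w;r-m)$, since the factors $(k+2)^m$ and $(k+1)^m$ genuinely depend on the order of differentiation and so are naturally paired with $A(w;r-m)$.

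The remaining work is cosmetic reindexing for the $G_0$ and $G_1$ pieces, whose numerator factors are low degree in $w$. For these I would use the symmetry $\binom{r}{m}=\binom{r}{r-m}$ to replace the summation index $m$ by $r-m$, converting each associated $A(w;r-m)$ into $A(w;m)$. The $G_1$ term then reads $wG_1\sum_{m=0}^r\binom{r}{m}A(w;m)$, matching the second term of the theorem. The $G_0$ term is the only delicate one: property~\ref{it.ffasedb} applied to the constant-plus-linear numerator $-(w-1)G_0=G_0-G_0 w$ yields $D^m[-(w-1)G_0]=G_0\delta_{m0}-G_0 w$, so the constant part survives only at $m=0$ and, after reindexing, contributes an isolated $G_0 A(w;r)$, while the linear part contributes $-G_0 w\sum_{m=0}^r\binom{r}{m}A(w;m)$; collecting these into a single sum produces exactly $-G_0\sum_{m=0}^r\binom{r}{m}(w-\delta_{rm})A(w;m)$.

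The main obstacle is therefore purely bookkeeping: one must keep the Kronecker-delta term coming from the constant coefficient of the $G_0$ numerator correctly aligned through the reindexing, so that it collapses cleanly into the compact weight $(w-\delta_{rm})$ rather than being lost or doubled. Once the four contributions are assembled, the recursion for the $A(w;p)$ is nothing but Lemma~\ref{lem.piatqhb}, which renders every coefficient computable and completes the statement.
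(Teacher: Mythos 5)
Your proposal is correct and is exactly the argument the paper intends: the paper's own proof is the one-line instruction to apply $D^r$ to Lemma~\ref{lem.fxf1joy} using properties~\ref{it.kor1u1b}, \ref{it.ffasedb}, \ref{it.xvq5fhg} and Lemma~\ref{lem.piatqhb}, and your write-up carries out precisely that computation, with the Kronecker-delta bookkeeping for the $G_0$ term handled correctly so that it collapses into the weight $(w-\delta_{rm})$.
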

\begin{proof}
Apply $D^r$ to both sides of the identity of Lemma~\ref{lem.fxf1joy} and make use of Lemma~\ref{lem.piatqhb} and the properties~\ref{it.kor1u1b}, \ref{it.ffasedb} and \ref{it.xvq5fhg} of the $D$ operator.
\end{proof}
Numerous interesting summation formulas can be obtained directly from Theorem~\ref{thm.lou2j9u}. By setting $w=1/2$, $r=0,1,2$, respectively, we already have some examples:
\begin{equation}
\sum_{j = 0}^k {\frac{{G_j }}{{2^j }}}  = 2G_0  + 2G_1  - \frac{{G_k }}{{2^k }} - \frac{{G_{k + 1} }}{{2^{k - 1} }}\,,
\end{equation}
corresponding to Vajda~\cite{vajda}, Formula~(37), proved by induction, which in the limit as $k\to\infty$ gives
\begin{equation}
\sum_{j = 0}^\infty  {\frac{{G_j }}{{2^j }}}  = 2G_0  + 2G_1\,.
\end{equation}
\begin{equation}
\sum_{j = 0}^k {\frac{{jG_j }}{{2^j }}}  = 6G_0  + 10G_1  - (k + 6)\frac{{G_k }}{{2^k }} - (k + 5)\frac{{G_{k + 1} }}{{2^{k-1} }}\,,
\end{equation}
\begin{equation}\label{eq.s4zoecu}
\sum_{j = 0}^\infty {\frac{{jG_j }}{{2^j }}}  = 6G_0  + 10G_1\,.
\end{equation}
\begin{equation}
\begin{split}
\sum_{j = 0}^k {\frac{{j^2G_j }}{{2^j }} }  &= 58G_0  + 94G_1  - (k^2  + 12k + 58)\frac{{G_k }}{{2^k }}\\
&\qquad - (k^2  + 10k + 47)\frac{{G_{k + 1} }}{{2^{k - 1} }}\,,
\end{split}
\end{equation}
\begin{equation}
\sum_{j = 0}^\infty {\frac{{j^2G_j }}{{2^j }}}  = 58G_0  + 94G_1\,.
\end{equation}
Note that the identity~\eqref{eq.s4zoecu} subsumes Formulas~(A3.52) and (A3.53) of Dunlap~\cite{dunlap}.

\bigskip

If $i=\sqrt{-1}$ is the imaginary unit, then using $f_j=i^j j^r G_j$ in the summation identity
\[
\sum_{j = 0}^{2k} {f_j }  = \sum_{j = 0}^k {f_{2j} }  + \sum_{j = 1}^k {f_{2j - 1} } 
\]
allows us to write
\begin{equation}\label{eq.f1d4elc}
\sum_{j = 0}^{2k} {i^j j^r G_j }  = 2^r \sum_{j = 0}^k {( - 1)^j j^r G_{2j} }  + i\sum_{j = 1}^k {( - 1)^{j - 1} (2j - 1)^r G_{2j - 1} }\,.
\end{equation}
Thus, setting $w=i$ in Theorem~\ref{thm.lou2j9u} and making use of identity~\eqref{eq.f1d4elc}, we have the following results for $r=0,1$.
\begin{equation}\label{eq.s8gkjr0}
\sum_{j = 0}^k {( - 1)^j G_{2j} }  = \frac{3}{5}G_0  - \frac{1}{5}G_1  + ( - 1)^k \frac{2}{5}G_{2k}  + ( - 1)^k \frac{1}{5}G_{2k + 1}\,,
\end{equation}
\begin{equation}\label{eq.qm5qr44}
\sum_{j = 1}^k {( - 1)^{j - 1} G_{2j - 1} }  =  - \frac{1}{5}G_0  + \frac{2}{5}G_1  + ( - 1)^k \frac{1}{5}G_{2k}  - ( - 1)^k \frac{2}{5}G_{2k + 1}\,,
\end{equation}
\begin{equation}
\sum_{j = 0}^k {( - 1)^j jG_{2j} }  =  - \frac{1}{5}G_0  + \frac{1}{5}( - 1)^k (2k + 1)G_{2k}  + \frac{1}{5}( - 1)^k kG_{2k + 1}\,,
\end{equation}
\begin{equation}
\begin{split}
\sum_{j = 1}^k {( - 1)^{j - 1} jG_{2j - 1} }  &=  - \frac{1}{5}G_0  + \frac{1}{5}G_1  + \frac{1}{5}( - 1)^k (k + 1)G_{2k}\\
&\quad - \frac{1}{5}( - 1)^k (2k + 1)G_{2k + 1}\,.
\end{split}
\end{equation}
Identities~\eqref{eq.s8gkjr0} and \eqref{eq.qm5qr44} are the alternating counterparts of Formulas~(34) and (35) of Vajda~\cite{vajda}.
\begin{cor}\label{cor.tf87enm}
If $k$ is a nonnegative integer and $w$ is a real or complex parameter, then
\[
\begin{split}
\sum_{j = 0}^k {w^j jG_j }  &= \frac{{2 - w}}{{(1 - w - w^2)^2 }}w^2 G_0  + \frac{{w^2  + 1}}{{(1 - w - w^2)^2 }}wG_1\\ 
&\quad + \frac{{kw^2  + (k + 1)w - (k + 2)}}{{(1 - w - w^2)^2 }}w^{k + 2} G_k\\ 
&\quad + \frac{{(k - 1)w^2  + kw - (k + 1)}}{{(1 - w - w^2)^2 }}w^{k + 1} G_{k + 1}\,. 
\end{split}
\]
\end{cor}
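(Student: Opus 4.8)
The plan is to obtain this identity as the special case $r=1$ of Theorem~\ref{thm.lou2j9u}, so the only genuine work is to compute the two rational functions $A(w;0)$ and $A(w;1)$ and then carry out the algebraic simplification of the four coefficient sums. No new idea is needed beyond the theorem already established.

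First I would read off $A(w;0)$ directly from its definition, $A(w;0)=1/(1-w-w^2)$. Then I would apply the recursion of Lemma~\ref{lem.piatqhb} with $p=1$: since the Kronecker term $\delta_{10}$ vanishes and the inner sum collapses to its single $j=0$ contribution, I get $(1-w-w^2)A(w;1)=w(2w+1)A(w;0)$, hence $A(w;1)=w(2w+1)/(1-w-w^2)^2$.

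Next I would specialize Theorem~\ref{thm.lou2j9u} to $r=1$, so that each of the four binomial sums reduces to just its $m=0$ and $m=1$ terms. The $G_0$- and $G_1$-sums involve $A(w;m)$ directly and produce $wA(w;0)+(w-1)A(w;1)$ and $A(w;0)+A(w;1)$ respectively, the factor $(w-\delta_{1m})$ being where I must be most careful. The $G_k$- and $G_{k+1}$-sums instead involve $A(w;1-m)$, so the index is reversed and I obtain $A(w;1)+(k+2)A(w;0)$ and $A(w;1)+(k+1)A(w;0)$.

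Finally I would substitute the explicit $A(w;0)$ and $A(w;1)$, put everything over the common denominator $(1-w-w^2)^2$, and collect numerators. For instance the $G_0$ numerator becomes $-w(1-w-w^2)-(w-1)w(2w+1)=w^2(2-w)$, matching the stated coefficient, and the remaining three collapse analogously. The one place demanding attention is the bookkeeping of the reversed $A$-index in the $G_k$ and $G_{k+1}$ terms together with the sign flips that occur on clearing the numerator; the polynomial arithmetic itself is entirely routine, and I do not anticipate any conceptual obstacle.
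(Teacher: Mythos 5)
Your proposal is correct and is exactly the route the paper intends: the corollary is the $r=1$ specialization of Theorem~\ref{thm.lou2j9u}, with $A(w;0)=1/(1-w-w^2)$ and $A(w;1)=w(2w+1)/(1-w-w^2)^2$ obtained from Lemma~\ref{lem.piatqhb}, and your four numerator computations (e.g.\ $-w(1-w-w^2)-(w-1)w(2w+1)=w^2(2-w)$) all check out. No gap.
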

In particular, we have
\begin{equation}
\sum_{j = 0}^k {2^j jG_j }  = \frac{2}{5}G_1  + \frac{{2^{k + 2} }}{5}kG_k  + \frac{{2^{k + 1} }}{5}(k - 1)G_{k + 1}
\end{equation}
and
\begin{equation}
\begin{split}
\sum_{j = 0}^k {3^j jG_j }  &=  - \frac{9}{{121}}G_0  + \frac{{30}}{{121}}G_1  + \frac{{3^{k + 2} }}{{121}}(11k + 1)G_k\\
&\qquad+ \frac{{3^{k + 1} }}{{121}}(11k - 10)G_{k + 1}\,.
\end{split}
\end{equation}
\begin{cor}\label{cor.ap9or90}
If $r$ is a nonnegative integer and $w$ is a real or complex parameter such that $w^kG_k\to 0$ as $k\to\infty$ for all positive integers $k$, then
\[
\begin{split}
S_\infty (w,r) = \sum_{j = 0}^\infty {w^j j^r G_j }&=  - G_0 \sum_{m = 0}^r {\binom rm(w-\delta _{rm})A(w;m)}\\
&\qquad + wG_1 \sum_{m = 0}^r {\binom rmA(w;m)}\,.
\end{split}
\]

\end{cor}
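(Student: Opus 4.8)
The plan is to obtain the corollary as the $k\to\infty$ limit of the finite evaluation in Theorem~\ref{thm.lou2j9u}. First I would write down that formula and observe that its first two sums, namely $-G_0\sum_{m=0}^r\binom rm(w-\delta_{rm})A(w;m)$ and $wG_1\sum_{m=0}^r\binom rm A(w;m)$, are independent of $k$ and coincide verbatim with the claimed right-hand side. Hence it remains only to show that the two boundary contributions, $-w^{k+2}G_k\sum_{m=0}^r\binom rm(k+2)^mA(w;r-m)$ and $-w^{k+1}G_{k+1}\sum_{m=0}^r\binom rm(k+1)^mA(w;r-m)$, tend to zero as $k\to\infty$. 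Since each $A(w;p)$ is a fixed rational function of $w$ not depending on $k$, it suffices to prove that $(k+2)^m w^{k+2}G_k\to0$ and $(k+1)^m w^{k+1}G_{k+1}\to0$ for each $m$ with $0\le m\le r$.

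The main obstacle is precisely this last point: the hypothesis only supplies $w^kG_k\to0$, whereas the boundary terms carry an extra polynomial factor $(k+j)^m$. To bridge the gap I would invoke the Binet-type representation $G_k=c_1\phi^k+c_2\psi^k$, where $\phi=(1+\sqrt5)/2$ and $\psi=(1-\sqrt5)/2$ are the roots of $x^2-x-1$ and the constants $c_1,c_2$ are fixed by the seeds $G_0,G_1$. Then $w^kG_k=c_1(w\phi)^k+c_2(w\psi)^k$, and the requirement $w^kG_k\to0$ forces the dominant ratio (generically $|w\phi|$) to be strictly less than $1$, so that in fact $|w^kG_k|\le C\rho^k$ for some $\rho<1$ and constant $C$. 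Geometric decay dominates any fixed polynomial, giving $(k+j)^m|w|^{k+j}G_k\to0$; the index shifts from $G_k$ to $G_{k+1}$ and from $w^k$ to $w^{k+1}$ or $w^{k+2}$ alter only constant prefactors and leave the geometric rate untouched. This annihilates both boundary sums and leaves exactly the two surviving sums, which is the assertion.

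An alternative, operator-theoretic route avoids the boundary analysis entirely, and I would include it as a consistency check: start from the generating function $S_\infty(w,0)=\big((1-w)G_0+wG_1\big)A(w;0)$ established after Lemma~\ref{lem.fxf1joy}, and apply $D^r$ to both sides. On the left, termwise differentiation inside the radius of convergence gives $D^rS_\infty(w,0)=S_\infty(w,r)$ by property~\ref{it.xvq5fhg}. On the right, the Leibniz rule (property~\ref{it.kor1u1b}) together with property~\ref{it.ffasedb} applied to the linear polynomial $(1-w)G_0+wG_1$, whose $D^{r-m}$-image is the simple expression $G_0\delta_{rm}+(G_1-G_0)w$, reproduces the stated combination of the $A(w;m)$ after collecting the $m=r$ term against the $\delta_{rm}$ inside the claimed formula.
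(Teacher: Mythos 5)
Your proposal is correct, and its main route is the same one the paper implicitly takes: the corollary is obtained from Theorem~\ref{thm.lou2j9u} by discarding the two $k$-dependent boundary terms in the limit $k\to\infty$ (the paper never writes this out, remarking only, in the analogous situation of Corollary~\ref{cor.tf87enm}, that one ``drops the last two terms''). What you add, and what the paper omits, is the observation that this step is not literally immediate from the hypothesis: the boundary terms carry factors $(k+2)^m$ and $(k+1)^m$, so one needs $k^m w^k G_k\to 0$, not merely $w^k G_k\to 0$. Your Binet-type argument --- that $G_k=c_1\phi^k+c_2\psi^k$ forces the decay of $w^kG_k$, when it occurs, to be geometric, so that $|w^kG_k|\le C\rho^k$ with $\rho<1$ and polynomial factors are harmless --- closes this gap cleanly (the only case needing a word is $c_1=0$, where the relevant ratio is $|w\psi|$ rather than $|w\phi|$, but the conclusion is the same). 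Your alternative derivation, applying $D^r$ to the generating function $S_\infty(w,0)=\bigl((1-w)G_0+wG_1\bigr)A(w;0)$ via the Leibniz rule and property~\ref{it.ffasedb}, is also correct and reproduces the stated formula exactly; its only extra cost is justifying termwise application of $D$ to the infinite series, which holds inside the radius of convergence. Either way the statement is established, and your version is more rigorous than the paper's tacit one.
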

Thus, Corollary~\ref{cor.ap9or90} provides the evaluation of generating functions for $j^rG_j$. A strategy for using probability to evaluate $S_\infty (1/2,r)$ was discussed in reference~\cite{benjamin}. Dropping the last two terms in Corollary~\ref{cor.tf87enm}, we have
\begin{equation}
S_\infty (w,1)=\sum_{j = 0}^\infty {w^j jG_j }  = \frac{{2 - w}}{{(w^2  + w - 1)^2 }}w^2 G_0  + \frac{{w^2  + 1}}{{(w^2  + w - 1)^2 }}wG_1\,,
\end{equation}
a result that was also obtained and whose convergence was exhaustively discussed by Glaister~\cite{glaister95,glaister96} for the Fibonacci case, (G=F).
\begin{cor}\label{cor.hba6sh9}
If $k$ and $r$ are nonnegative integers, then
\[
\begin{split}
S_k (r) = \sum_{j = 0}^k {j^r G_j }&=  - G_0 \sum_{m = 0}^r {\binom rm(1-\delta _{rm})A(m)}  + G_1 \sum_{m = 0}^r {\binom rmA(m)}\\ 
&\quad- G_k \sum_{m = 0}^r {\binom rm(k + 2)^m A(r - m)}\\ 
&\quad- G_{k + 1} \sum_{m = 0}^r {\binom rm(k + 1)^m A(r - m)}\,,
\end{split}
\]
where the $A(m)$ satisfy the recurrence relation
\[
A(m) =-\delta_{m0}  - \sum_{j = 0}^{m - 1} {\binom mj(2^{m - j}  + 1)A(j)} ,\quad m=0,1,2,\ldots
\]

\end{cor}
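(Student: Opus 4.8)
The plan is to obtain Corollary~\ref{cor.hba6sh9} as the pure specialization $w=1$ of Theorem~\ref{thm.lou2j9u}, so the whole argument reduces to substituting $w=1$ and checking that nothing degenerates. First I would confirm that this substitution is legitimate. By definition $A(w;m)=D^m\bigl((1-w-w^2)^{-1}\bigr)$, and since $D=w\,{\rm d}/{\rm d}w$ maps rational functions to rational functions, each $A(w;m)$ is rational in $w$ with poles only at the roots $w=(-1\pm\sqrt5)/2$ of $1-w-w^2$. Because $1-1-1=-1\neq 0$, the point $w=1$ is a regular point of every $A(w;m)$, so the values $A(m):=A(1;m)$ are all finite and well defined; in particular $A(0)=(1-1-1)^{-1}=-1$.

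Next I would specialize the recursion for the $A(w;m)$ (Lemma~\ref{lem.piatqhb}, reproduced in Theorem~\ref{thm.lou2j9u}) at $w=1$. Setting $w=1$ turns the prefactor $1-w-w^2$ into $-1$ and the bracket $2^{p-j}w+1$ into $2^{p-j}+1$, giving $-A(p)=\delta_{p0}+\sum_{j=0}^{p-1}\binom{p}{j}(2^{p-j}+1)A(j)$, which rearranges at once into the claimed recurrence $A(p)=-\delta_{p0}-\sum_{j=0}^{p-1}\binom{p}{j}(2^{p-j}+1)A(j)$.

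Finally I would put $w=1$ into the four-term evaluation of Theorem~\ref{thm.lou2j9u}. The weight powers collapse via $w^{k+1}=w^{k+2}=1$, the coefficient $wG_1$ becomes $G_1$, and $w-\delta_{rm}$ becomes $1-\delta_{rm}$, so the first sum becomes $-G_0\sum_{m=0}^r\binom{r}{m}(1-\delta_{rm})A(m)$, the second $G_1\sum_{m=0}^r\binom{r}{m}A(m)$, and the last two $-G_k\sum_{m=0}^r\binom{r}{m}(k+2)^m A(r-m)$ and $-G_{k+1}\sum_{m=0}^r\binom{r}{m}(k+1)^m A(r-m)$. These are exactly the four terms in the statement, which completes the argument.

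There is no substantive obstacle here: the corollary is a direct evaluation at a fixed value of the parameter already covered by Theorem~\ref{thm.lou2j9u}. The only point demanding care is the one flagged in the first step, namely verifying that $w=1$ does not lie among the singularities of the rational functions $A(w;m)$ — and since $1-w-w^2$ evaluates to $-1$ rather than $0$ at $w=1$, the specialization of each $A(w;m)$ and of the recursion is valid, so the remaining work is the routine bookkeeping of simplifying the four sums.
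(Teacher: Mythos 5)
Your proposal is correct and follows exactly the route the paper intends: Corollary~\ref{cor.hba6sh9} is the specialization $w=1$ of Theorem~\ref{thm.lou2j9u}, with the recurrence for $A(m)=A(1;m)$ obtained by setting $w=1$ in Lemma~\ref{lem.piatqhb} and using $1-w-w^2=-1$ there. Your extra check that $w=1$ is not a pole of the $A(w;m)$ is a sensible (if brief) addition, but the argument is otherwise the same as the paper's.
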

Here is a little list from Corollary~\ref{cor.hba6sh9}.
\begin{equation}
\sum_{j = 0}^k {G_j }  =  - G_1  + G_k  + G_{k + 1}\,,
\end{equation}
\begin{equation}
\sum_{j = 0}^k {jG_j }  = G_0  + 2G_1  + (k - 1)G_k  + (k - 2)G_{k + 1}\,,
\end{equation}
\begin{equation}
\sum_{j = 0}^k {j^2 G_j }  =  - 5G_0  - 8G_1  + (k^2 - 2k + 5)G_k  + (k^2 - 4k + 8 )G_{k + 1}\,,
\end{equation}
\begin{equation}
\begin{split}
\sum_{j = 0}^k {j^3 G_j }  &= 31G_0  + 50G_1  + (k^3  - 3k^2  + 15k - 31)G_k\\
&\qquad+ (k^3  - 6k^2  + 24k - 50)G_{k + 1}\,,
\end{split}
\end{equation}
\begin{equation}
\begin{split}
\sum_{j = 0}^k {j^4 G_j }  &=  - 257G_0  - 416G_1  + (k^4  - 4k^3  + 30k^2  - 124k + 257)G_k\\
&\qquad+ (k^4  - 8k^3  + 48k^2  - 200k + 416)G_{k + 1}\,,
\end{split}
\end{equation}
\begin{equation}
\begin{split}
\sum_{j = 0}^k {j^5 G_j }&= 2671G_0  + 4322G_1  + (k^5  - 5k^4  + 50k^3  - 310k^2  + 1285k - 2671)G_k\\
&\qquad+ (k^5  - 10k^4  + 80k^3  - 500k^2  + 2080k - 4322)G_{k + 1}\,,
\end{split}
\end{equation}
\begin{equation}
\begin{split}
\sum_{j = 0}^k {j^6 G_j }  &=  - 33305G_0  - 53888G_1\\ 
&\qquad+ (k^6  - 6k^5  + 75k^4  - 620k^3  + 3855k^2  - 16026k + 33305)G_k\\
&\qquad + (k^6  - 12k^5  + 120k^4  - 1000k^3  + 6240k^2  - 25932k + 53888)G_{k + 1}\,.
\end{split}
\end{equation}
\begin{cor}\label{cor.zym4qge}
If $k$ and $r$ are nonnegative integers, then
\[
\begin{split}
\overline S_k(r)=S_k (-1,r) &= \sum_{j = 0}^k {(-1)^j j^r G_j }\\
&=   G_0 \sum_{m = 0}^r {\binom rm(\delta _{rm}  + 1)\overline A(m)}  - G_1 \sum_{m = 0}^r {\binom rm\overline A(m)}\\ 
&\quad- (-1)^{k} G_k \sum_{m = 0}^r {\binom rm(k + 2)^m \overline A(r - m)}\\ 
&\quad+ (-1)^{k} G_{k + 1} \sum_{m = 0}^r {\binom rm(k + 1)^m \overline A(r - m)}\,,
\end{split}
\]
where the $\overline A(p)$ are given recursively by
\[
\overline A(p) =  \delta_{p0}+\sum_{j = 0}^{p - 1} {\binom pj(2^{p - j}  - 1)\overline A(j)} ,\quad p=1,2,\ldots
\]

\end{cor}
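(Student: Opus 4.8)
The plan is to obtain Corollary~\ref{cor.zym4qge} by specializing Theorem~\ref{thm.lou2j9u} to the single value $w=-1$ and verifying that the resulting expression collapses to the stated form. The key observation that makes everything clean is that at $w=-1$ the quadratic $1-w-w^2=1+1-1=1$, so the denominator appearing throughout both the recursion of Lemma~\ref{lem.piatqhb} and the master formula disappears entirely. Writing $\overline A(p):=A(-1;p)$, the recursion $(1-w-w^2)A(w;p)=\delta_{p0}+w\sum_{j=0}^{p-1}\binom pj(2^{p-j}w+1)A(w;j)$ becomes $\overline A(p)=\delta_{p0}-\sum_{j=0}^{p-1}\binom pj(1-2^{p-j})\overline A(j)$, and rewriting $-(1-2^{p-j})=2^{p-j}-1$ recovers precisely the recurrence for $\overline A(p)$ asserted in the corollary. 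This handles the auxiliary quantities with no further work.

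Next I would substitute $w=-1$ into the four terms of Theorem~\ref{thm.lou2j9u}, recording the elementary simplifications $wG_1=-G_1$, $w^{k+2}=(-1)^{k+2}=(-1)^k$, and $w^{k+1}=(-1)^{k+1}=-(-1)^k$. In the $G_0$-term the inner factor $(w-\delta_{rm})$ becomes $-(1+\delta_{rm})$, so the prefactor $-G_0$ converts this into $+G_0\sum_{m=0}^r\binom rm(\delta_{rm}+1)\overline A(m)$, matching the first line. The $G_1$-term reduces immediately to $-G_1\sum_{m=0}^r\binom rm\overline A(m)$. For the two boundary terms, the coefficient $-w^{k+2}=-(-1)^k$ multiplies the $G_k$-sum, giving $-(-1)^kG_k\sum_{m=0}^r\binom rm(k+2)^m\overline A(r-m)$, while $-w^{k+1}=+(-1)^k$ multiplies the $G_{k+1}$-sum, producing $+(-1)^kG_{k+1}\sum_{m=0}^r\binom rm(k+1)^m\overline A(r-m)$; these reproduce the last two lines exactly.

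I expect no genuine obstacle, since the corollary is merely an evaluation of the master theorem at one parameter value together with a relabelling of the auxiliary rational functions. The only point demanding care is the sign bookkeeping in the two boundary terms, where the parities of the exponents $k+1$ and $k+2$ must be resolved consistently and combined with the explicit minus signs already present in Theorem~\ref{thm.lou2j9u}; getting the flip of sign between the $G_k$- and $G_{k+1}$-contributions right is the one place an error could creep in. Everything else is a direct substitution.
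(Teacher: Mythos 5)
Your specialization of Theorem~\ref{thm.lou2j9u} at $w=-1$ is correct in every detail (the denominator $1-w-w^2$ becomes $1$, the recursion for $\overline A(p)$ comes out right, and all four sign computations check out), and this is exactly the route the paper intends, since the corollary is stated without separate proof as a direct consequence of the theorem.
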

Here are a few evaluations.
\begin{equation}
\sum_{j = 0}^k {( - 1)^j G_j }  = 2G_0  - G_1  - ( - 1)^k G_k  + ( - 1)^k G_{k + 1}\,,
\end{equation}
\begin{equation}
\sum_{j = 0}^k {( - 1)^j jG_j }  = 3G_0  - 2G_1  - ( - 1)^k (k + 3)G_k  + ( - 1)^k (k + 2)G_{k + 1}\,,
\end{equation}
\begin{equation}
\begin{split}
\sum_{j = 0}^k {( - 1)^j j^2G_j }  &= 13G_0  - 8G_1  - ( - 1)^k (k^2  + 6k + 13)G_k\\
&\qquad+ ( - 1)^k (k^2  + 4k + 8)G_{k + 1}\,,
\end{split}
\end{equation}
\begin{equation}
\begin{split}
\sum_{j = 0}^k {( - 1)^j j^3 G_j }  &= 81G_0  - 50G_1\\ 
&\quad - ( - 1)^k (k^3  + 9k^2  + 39k + 81)G_k\\ 
&\qquad + ( - 1)^k (k^3  + 6k^2  + 24k + 50)G_{k + 1}\,.
\end{split}
\end{equation}

\section{Evaluation of $S_k^n (w,r)=\sum_{j = 0}^k {w^jj^r G_j{}^n } $}
\begin{lemma}\label{lem.tspbkzv}
If $k$ and $n$ are nonnegative integers and $w$ is a parameter, real or complex, then
\[
\begin{split}
S_k^n (w,0)=\sum_{j = 0}^k {w^j G_j^n }  &= \left( {\sum_{s = 0}^{n + 1} {\left\{{\binom{n+1}s}_F( - 1)^{\left\lceil {(n - s + 1)/2} \right\rceil } \sum_{j = 0}^{s - 1} {w^{j + n - s + 1} G_j^n } \right\}} } \right.\\
&\qquad\qquad{\left. - \sum_{s = 0}^{n + 1} {\left\{{\binom{n+1}s}_F( - 1)^{\left\lceil {(n - s + 1)/2} \right\rceil } \sum_{j = k + 1}^{k + s} {w^{j + n - s + 1} G_j^n }\right\} }\right) }\\
&\qquad\qquad\qquad{{} \mathord{\left/
 {\vphantom {{} {\sum_{s = 0}^{n + 1} {{\binom{n+1}s}_F( - 1)^{\left\lceil {(n - s + 1)/2} \right\rceil } w^{n - s + 1} } }}} \right.
 \kern-\nulldelimiterspace} {\sum_{s = 0}^{n + 1} {{\binom{n+1}s}_F( - 1)^{\left\lceil {(n - s + 1)/2} \right\rceil } w^{n - s + 1} } }}\,,
 \end{split}
\]
where $\lceil u\rceil$ is the smallest integer greater than $u$ and the Fibonomial coefficients are defined by
\[
{\binom pq}_F = \frac{{F_p F_{p - 1}  \cdots F_{p - q + 1} }}{{F_q F_{q - 1}  \cdots F_1 }} = \prod_{j = 1}^q {\frac{{F_{p - q + j} }}{{F_j }}}\,. 
\]

\end{lemma}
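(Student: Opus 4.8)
The plan is to imitate the proof of Lemma~\ref{lem.fxf1joy}, replacing the two-term recurrence $G_j = G_{j-1}+G_{j-2}$ by the $(n+1)$-term recurrence satisfied by the $n$-th powers $G_j^n$. Writing the Binet form $G_j = A\alpha^j + B\beta^j$, where $\alpha=(1+\sqrt5)/2$ and $\beta=(1-\sqrt5)/2$ are the roots of $x^2-x-1$ and $A,B$ are fixed by the seeds $G_0,G_1$, the binomial expansion
\[
G_j^n = \sum_{t=0}^n \binom nt A^{n-t}B^t(\alpha^{n-t}\beta^t)^j
\]
exhibits $G_j^n$ as a linear combination of the $n+1$ geometric sequences $(\alpha^{n-t}\beta^t)^j$, $t=0,\dots,n$. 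Consequently $G_j^n$ is annihilated by the linear recurrence whose characteristic polynomial is $\prod_{t=0}^n(x-\alpha^{n-t}\beta^t)$, and this holds for every integer $j$ under the usual extension to negative indices.

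The key step is to identify the coefficients of this characteristic polynomial with Fibonomial coefficients carrying the stated signs. Since the roots $\alpha^{n-t}\beta^t = \alpha^n(\beta/\alpha)^t$ form a geometric progression with ratio $q=\beta/\alpha$, the finite $q$-binomial theorem gives
\[
\prod_{t=0}^n(x-\alpha^n q^t)=\sum_{s=0}^{n+1}(-1)^s q^{\binom s2}{\binom{n+1}{s}}_q\alpha^{ns}\,x^{n+1-s}\,.
\]
Using $1-q^m=(\alpha-\beta)F_m/\alpha^m$ to convert the Gaussian coefficient ${\binom{n+1}{s}}_q$ into the Fibonomial coefficient ${\binom{n+1}{s}}_F$, the surviving powers of $\alpha$ and $\beta$ collapse through $\alpha\beta=-1$ to $(\alpha\beta)^{\binom s2}=(-1)^{\binom s2}$, so the coefficient of $x^{n+1-s}$ is exactly $(-1)^{s+\binom s2}{\binom{n+1}{s}}_F$. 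Because $s+\binom s2=\binom{s+1}2$ is the $s$-th triangular number, its parity equals that of $\lceil s/2\rceil$; after the reindexing $s\mapsto n+1-s$ (which leaves ${\binom{n+1}{s}}_F$ fixed) this produces precisely the power recurrence
\[
\sum_{s=0}^{n+1}(-1)^{\lceil(n-s+1)/2\rceil}{\binom{n+1}{s}}_F\,G_{j+s}^n=0\,.
\]

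With this recurrence in hand, I multiply by $w^{j}$, sum over $j=0,\dots,k$, and in each inner sum shift the index to recover $S_k^n(w,0)$ together with boundary corrections: substituting $i=j+s$ in $\sum_{j=0}^k w^j G_{j+s}^n = w^{-s}\sum_{i=s}^{k+s}w^i G_i^n$ and splitting off the ranges $0\le i\le s-1$ and $k+1\le i\le k+s$ produces, after multiplication by $w^{n+1}$, the coefficient $\sum_{s=0}^{n+1}(-1)^{\lceil(n-s+1)/2\rceil}{\binom{n+1}{s}}_F\,w^{n-s+1}$ in front of $S_k^n(w,0)$, a low-end contribution $+\sum_{j=0}^{s-1}w^{j+n-s+1}G_j^n$, and a high-end contribution $-\sum_{j=k+1}^{k+s}w^{j+n-s+1}G_j^n$. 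Dividing by the coefficient of $S_k^n(w,0)$ yields exactly the asserted identity.

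The collecting-and-shifting manipulation of the last paragraph is routine and runs parallel to Lemma~\ref{lem.fxf1joy}; the main obstacle is the bookkeeping of the preceding step, namely pinning down the characteristic polynomial's coefficients in Fibonomial form and verifying that the sign $(-1)^{s+\binom s2}$ coincides with the ceiling-function sign $(-1)^{\lceil(n-s+1)/2\rceil}$ once the summation index is reversed.
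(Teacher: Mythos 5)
Your proposal is correct, and the second half of it (multiply the power recurrence by $w^j$, sum over $j=0,\dots,k$, shift the index, split off the ranges $0\le j\le s-1$ and $k+1\le j\le k+s$, and solve for $S_k^n(w,0)$) is exactly what the paper does. The difference lies entirely in how the key recurrence $\sum_{s=0}^{n+1}(-1)^{\lceil(n-s+1)/2\rceil}{\binom{n+1}{s}}_F\,G_{j+s}^n=0$ is obtained: the paper simply cites it as Knuth, \emph{The Art of Computer Programming}, Section 1.2.8, Exercise 30, whereas you derive it from scratch via the Binet form, the observation that the roots $\alpha^{n-t}\beta^t$ form a geometric progression with ratio $q=\beta/\alpha$, the $q$-binomial theorem, and the conversion $1-q^m=(\alpha-\beta)F_m/\alpha^m$ from Gaussian to Fibonomial coefficients. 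I checked the bookkeeping: the exponent of $\alpha$ after conversion is $-\binom s2-s(n+1-s)+ns=\binom s2$, so the residual factor is indeed $(\alpha\beta)^{\binom s2}=(-1)^{\binom s2}$, and the parity of $s+\binom s2=\binom{s+1}2$ does agree with that of $\lceil s/2\rceil$, which after the reversal $s\mapsto n+1-s$ (using the symmetry of the Fibonomials) gives precisely the ceiling-function sign in the statement. So your argument is sound and buys self-containedness, at the cost of invoking the $q$-binomial theorem and some sign gymnastics that the paper sidesteps by citation; it also has the merit of actually verifying the sign convention of the cited identity, which is the most error-prone part of the statement.
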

\begin{proof}
Multiply through the following identity~\cite[\mbox{section 1.2.8, Exercise~30}]{knuth97} by $w^j$ 
\[
\sum_{s = 0}^{n + 1} {\binom{n+1}s_F( - 1)^{\left\lceil {(n - s + 1)/2} \right\rceil } G_{j + s}^n }  = 0
\]
and sum over $j$, obtaining
\[
\sum_{s = 0}^{n + 1} {{\binom{n+1}s}_F( - 1)^{\left\lceil {(n - s + 1)/2} \right\rceil } \sum_{j = 0}^k {w^j G_{j + s}^n } }  = 0\,,
\]
which after shifting the index $j$ becomes
\[
\sum_{s = 0}^{n + 1} {\binom{n+1}s_F( - 1)^{\left\lceil {(n - s + 1)/2} \right\rceil } w^{-s}\sum_{j = s}^{k + s} {w^j G_j^n } }  = 0\,.
\]
Since
\[
\sum_{j = s}^{k + s} {w^j G_j^n }  = \sum_{j = 0}^k {w^j G_{j}^n }  + \sum_{j = k + 1}^{k + s} {w^j G_{j}^n }  - \sum_{j = 0}^{s - 1} {w^j G_{j}^n }\,,
\]
we have
\[
\sum_{s = 0}^{n + 1} {\binom{n+1}s_F( - 1)^{\left\lceil {(n - s + 1)/2} \right\rceil } w^{ - s} \left( {\sum_{j = 0}^k {w^j G_{j}^n }  + \sum_{j = k + 1}^{k + s} {w^j G_{j}^n }  - \sum_{j = 0}^{s - 1} {w^j G_{j}^n } } \right)}  = 0\,.
\]
Thus,
\[
\sum_{s = 0}^{n + 1} {\binom{n+1}s_F( - 1)^{\left\lceil {(n - s + 1)/2} \right\rceil } w^{ - s} \left( {S_k^n(w,0)  + \sum_{j = k + 1}^{k + s} {w^j G_{j}^n }  - \sum_{j = 0}^{s - 1} {w^j G_{j}^n } } \right)}  = 0\,.
\]
Clearing brackets, multiplying through by $w^{n+1}$ and making $S_k^n(w,0)$ the subject, the lemma is proved.
\end{proof}
In particular, we have
\begin{equation}\label{eq.araqz6y}
\begin{split}
\sum_{j = 0}^k {w^j G_j^2 }  &=  - \frac{{(2w^2  + 2w - 1)G_0^2 }}{{w^3  - 2w^2  - 2w + 1}} - \frac{{(2w^2  - w)G_1^2 }}{{w^3  - 2w^2  - 2w + 1}}\\
&\qquad + \frac{{w^2 G_2^2 }}{{w^3  - 2w^2  - 2w + 1}} + \frac{{(2w^2  + 2w - 1)w^{k + 1} G_{k + 1}^2 }}{{w^3  - 2w^2  - 2w + 1}}\\
&\qquad + \frac{{(2w - 1)w^{k + 2} G_{k + 2}^2 }}{{w^3  - 2w^2  - 2w + 1}} - \frac{{w^{k + 3} G_{k + 3}^2 }}{{w^3  - 2w^2  - 2w + 1}}
\end{split}
\end{equation}
and
\begin{equation}\label{eq.kb3bild}
\begin{split}
\sum_{j = 0}^k {w^j G_j^3 }  &= \frac{{(3w^3  - 6w^2  - 3w + 1)G_0^3 }}{{w^4  + 3w^3  - 6w^2  - 3w + 1}} - \frac{{(6w^3  + 3w^2  - w)G_1^3 }}{{w^4  + 3w^3  - 6w^2  - 3w + 1}}\\
&\qquad - \frac{{(3w^3  - w^2 )G_2^3 }}{{w^4  + 3w^3  - 6w^2  - 3w + 1}} + \frac{{w^3 G_3^3 }}{{w^4  + 3w^3  - 6w^2  - 3w + 1}}\\
&\qquad - \frac{{(3w^3  - 6w^2  - 3w + 1)w^{k + 1} G_{k + 1}^3 }}{{w^4  + 3w^3  - 6w^2  - 3w + 1}} + \frac{{(6w^2  + 3w - 1)w^{k + 2} G_{k + 2}^3 }}{{w^4  + 3w^3  - 6w^2  - 3w + 1}}\\
&\qquad + \frac{{(3w - 1)w^{k + 3} G_{k + 3}^3 }}{{w^4  + 3w^3  - 6w^2  - 3w + 1}} - \frac{{w^{k + 4} G_{k + 4}^3 }}{{w^4  + 3w^3  - 6w^2  - 3w + 1}}\,.
\end{split}
\end{equation}
Interesting evaluations can already be obtained from~\eqref{eq.araqz6y} and~\eqref{eq.kb3bild}. For example, at $w=1$, we have
\begin{equation}
\sum_{j = 0}^k {G_j^2 }  = \frac{3}{2}G_0^2  + \frac{1}{2}G_1^2  - \frac{1}{2}G_2^2  - \frac{3}{2}G_{k + 1}^2  - \frac{1}{2}G_{k + 2}^2  + \frac{1}{2}G_{k + 3}^2
\end{equation}
and
\begin{equation}
\begin{split}
\sum_{j = 0}^k {G_j^3 }  &= \frac{5}{4}G_0^3  + 2G_1^3  + \frac{1}{2}G_2^3  - \frac{1}{4}G_3^3\\
&\qquad- \frac{5}{4}G_{k + 1}^3  - 2G_{k + 2}^3  - \frac{1}{2}G_{k + 3}^3  + \frac{1}{4}G_{k + 4}^3\,,
\end{split}
\end{equation}
while $w=1/2$ gives
\begin{equation}
\sum_{j = 0}^k {\frac{{G_j^2 }}{{2^j }}}  = \frac{4}{3}G_0^2  - \frac{2}{3}G_2^2  - \frac{1}{3}\frac{{G_{k + 1}^2 }}{{2^{k - 1} }} + \frac{1}{3}\frac{{G_{k + 3}^2 }}{{2^k }}
\end{equation}
and
\begin{equation}
\begin{split}
\sum_{j = 0}^k {\frac{{G_j^3 }}{{2^j }}}  &= \frac{{26}}{{25}}G_0^3  + \frac{{16}}{{25}}G_1^3  + \frac{2}{{25}}G_2^3  - \frac{2}{{25}}G_3^3\\
&\quad - \frac{{13}}{{25}}\frac{{G_{k + 1}^3 }}{{2^k }} - \frac{8}{{25}}\frac{{G_{k + 2}^3 }}{{2^k }} - \frac{1}{{25}}\frac{{G_{k + 3}^3 }}{{2^k }} + \frac{1}{{25}}\frac{{G_{k + 4}^3 }}{{2^k }}\,.
\end{split}
\end{equation}
At $w=2$, we have
\begin{equation}
\begin{split}
\sum_{j = 0}^k {2^j G_j^2 }  &= \frac{{11}}{3}G_0^2  + 2G_1^2  - \frac{4}{3}G_2^2  - \frac{{11}}{3}2^{k + 1} G_{k + 1}^2\\
&\quad- 2^{k + 2} G_{k + 2}^2  + \frac{1}{3}2^{k + 3} G_{k + 3}^2
\end{split}
\end{equation}
and
\begin{equation}
\begin{split}
\sum_{j = 0}^k {2^j G_j^3 }  &=  - \frac{5}{{11}}G_0^3  - \frac{{58}}{{11}}G_1^3  - \frac{{20}}{{11}}G_2^3  + \frac{8}{{11}}G_3^3  + \frac{5}{{11}}2^{k + 1} G_{k + 1}^3\\
&\qquad+ \frac{{29}}{{11}}2^{k + 2} G_{k + 2}^3  + \frac{5}{{11}}2^{k + 3} G_{k + 3}^3  - \frac{1}{{11}}2^{k + 4} G_{k + 4}^3\,.
\end{split}
\end{equation}
At $w=1/16$, we have
\begin{equation}
\begin{split}
\sum_{j = 0}^k {\frac{{G_j^2 }}{{2^{4j} }}}  &= \frac{{3552}}{{3553}}G_0^2  + \frac{{224}}{{3553}}G_1^2  + \frac{{16}}{{3553}}G_2^2\\
&\quad- \frac{{222}}{{3553}}\frac{{G_{k + 1}^2 }}{{2^{4k} }} - \frac{7}{{3553}}\frac{{G_{k + 2}^2 }}{{2^{4k - 1} }} - \frac{1}{{3553}}\frac{{G_{k + 3}^2 }}{{2^{4k} }}\,,
\end{split}
\end{equation}
giving
\begin{equation}
\sum_{j = 0}^\infty {\frac{{G_j^2 }}{{2^{4j} }}}  = \frac{{3552}}{{3553}}G_0^2  + \frac{{224}}{{3553}}G_1^2  + \frac{{16}}{{3553}}G_2^2\,.
\end{equation}
Evaluation at $w=\sqrt {-1}=i$, $n=2,3$ gives
\begin{equation}
\begin{split}
\sum_{j = 0}^k {( - 1)^j G_{2j}^2 } &= \frac{5}{6}G_0^2  + \frac{1}{6}G_1^2  - \frac{1}{6}G_2^2  + \frac{{( - 1)^k }}{6}G_{2k + 1}^2\\
&\qquad + \frac{{( - 1)^k }}{2}G_{2k + 2}^2  - \frac{{( - 1)^k }}{6}G_{2k + 3}^2\,,
\end{split}
\end{equation}
\begin{equation}
\begin{split}
\sum_{j = 1}^k {( - 1)^{j - 1} G_{2j - 1}^2 }  &= \frac{1}{6}G_0^2  + \frac{1}{2}G_1^2  - \frac{1}{6}G_2^2  - \frac{5}{6}( - 1)^k G_{2k + 1}^2\\
&\qquad- \frac{{( - 1)^k }}{6}G_{2k + 2}^2  + \frac{{( - 1)^k }}{6}G_{2k + 3}^2\,,
\end{split}
\end{equation}
\begin{equation}
\begin{split}
\sum_{j = 0}^k {( - 1)^j G_{2j}^3 }  &= \frac{{23}}{{25}}G_0^3  - \frac{9}{{50}}G_1^3  - \frac{{13}}{{50}}G_2^3  + \frac{3}{{50}}G_3^3  - \frac{3}{{50}}( - 1)^k G_{2k + 1}^3\\
&\qquad+ \frac{{37}}{{50}}( - 1)^k G_{2k + 2}^3  + \frac{9}{{50}}( - 1)^k G_{2k + 3}^3  - \frac{2}{{25}}( - 1)^k G_{2k + 4}^3\,,
\end{split}
\end{equation}
and
\begin{equation}
\begin{split}
\sum_{j = 1}^k {( - 1)^{j - 1} G_{2j - 1}^3 }  &=  - \frac{3}{{50}}G_0^3  + \frac{{37}}{{50}}G_1^3  + \frac{9}{{50}}G_2^3  - \frac{2}{{25}}G_3^3  - \frac{{23}}{{25}}( - 1)^k G_{2k + 1}^3\\
&\qquad+ \frac{9}{{50}}( - 1)^k G_{2k + 2}^3  + \frac{{13}}{{50}}( - 1)^k G_{2k + 3}^3  - \frac{3}{{50}}( - 1)^k G_{2k + 4}^3\,.
\end{split}
\end{equation}
\begin{cor}[Generating function for $G_j{}^n$]\label{cor.w8th5gl}
If $n$ is a nonnegative integer and $w$ is a parameter, real or complex, such that $w^kG_k{}^n$ vanishes as $k$ approaches infinity, then
\[
\begin{split}
S_\infty^n (w,0)=\sum_{j = 0}^\infty {w^j G_j^n }  &= {\sum_{s = 0}^{n + 1} {\left\{{\binom{n+1}s}_F( - 1)^{\left\lceil {(n - s + 1)/2} \right\rceil } \sum_{j = 0}^{s - 1} {w^{j + n - s + 1} G_j^n } \right\}} }\\
&\qquad\qquad\qquad{{} \mathord{\left/
 {\vphantom {{} {\sum_{s = 0}^{n + 1} {{\binom{n+1}s}_F( - 1)^{\left\lceil {(n - s + 1)/2} \right\rceil } w^{n - s + 1} } }}} \right.
 \kern-\nulldelimiterspace} {\sum_{s = 0}^{n + 1} {{\binom{n+1}s}_F( - 1)^{\left\lceil {(n - s + 1)/2} \right\rceil } w^{n - s + 1} } }}\,.
 \end{split}
\]

\end{cor}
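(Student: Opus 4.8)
The plan is simply to pass to the limit $k \to \infty$ in the finite-sum evaluation of Lemma~\ref{lem.tspbkzv}. The key structural observation is that, in that lemma, the denominator
\[
\Delta(w;n) = \sum_{s = 0}^{n + 1} {\binom{n+1}s}_F(-1)^{\lceil (n-s+1)/2 \rceil} w^{n-s+1}
\]
does not depend on $k$, while the numerator splits cleanly into a $k$-independent piece — the double sum whose inner index $j$ runs from $0$ to $s-1$ — and a $k$-dependent ``tail'' piece, namely
\[
-\sum_{s=0}^{n+1} {\binom{n+1}s}_F(-1)^{\lceil (n-s+1)/2 \rceil} \sum_{j=k+1}^{k+s} w^{j+n-s+1} G_j^n.
\]
The whole argument therefore reduces to showing that this tail piece tends to $0$ as $k \to \infty$.

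To see that the tail vanishes, I would factor each inner summand as $w^{j+n-s+1}G_j^n = w^{n-s+1}\,(w^j G_j^n)$. The outer sum over $s$ has only finitely many terms ($s = 0,\dots,n+1$ with $n$ fixed), and for each such $s$ the inner sum $\sum_{j=k+1}^{k+s}$ has at most $n+1$ terms; thus the tail is a finite linear combination of quantities of the form $w^j G_j^n$ with $j \ge k+1$. By hypothesis $w^j G_j^n \to 0$ as $j \to \infty$, and since every index occurring satisfies $j \ge k+1 \to \infty$, each such quantity tends to $0$. A finite sum of terms each tending to zero tends to zero, so the tail piece vanishes in the limit.

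Since the denominator $\Delta(w;n)$ is a nonzero constant in $k$ and the $k$-independent part of the numerator is literally the expression appearing in the statement, taking $k \to \infty$ on both sides of Lemma~\ref{lem.tspbkzv} yields the claimed formula. I would also remark that convergence of the series $\sum_{j=0}^\infty w^j G_j^n$ need not be assumed separately: the right-hand side of Lemma~\ref{lem.tspbkzv} has just been shown to converge as $k \to \infty$, and since that right-hand side equals the partial sum $S_k^n(w,0)$ for every finite $k$, the partial sums converge, establishing both convergence and the stated value at once. I do not anticipate any genuine obstacle here; the only point requiring a line of care is the term-by-term vanishing of the finite tail, which is immediate once the factor $w^j G_j^n$ is isolated.
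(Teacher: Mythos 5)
Your proposal is correct and follows exactly the route the paper intends: the corollary is obtained from Lemma~\ref{lem.tspbkzv} by letting $k\to\infty$ and observing that the $k$-dependent tail, a finite linear combination of terms $w^{j}G_j^n$ with $j\ge k+1$, vanishes under the stated hypothesis (mirroring the paper's explicit remark after Lemma~\ref{lem.fxf1joy} in the $n=1$ case). Your added observation that convergence of the series need not be assumed separately is a nice touch, though as in the paper one should tacitly assume $w$ is such that the denominator $\sum_{s=0}^{n+1}\binom{n+1}{s}_F(-1)^{\lceil(n-s+1)/2\rceil}w^{n-s+1}$ is nonzero.
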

In particular, the generating functions of $G_j{}^2$ and $G_j{}^3$ are, respectively,
\begin{equation}
S_\infty^2 (w,0)=\sum_{j = 0}^\infty {w^j G_j^2 }  = \frac{{w^2 G_2^2  - (2w^2  + 2w - 1)G_0^2  - (2w^2  - w)G_1^2 }}{{w^3  - 2w^2  - 2w + 1}}
\end{equation}
and
\begin{equation}
\begin{split}
S_\infty^3 (w,0)=\sum_{j = 0}^\infty {w^j G_j^3 }  &= \frac{{(3w^3  - 6w^2  - 3w + 1)G_0^3 }}{{w^4  + 3w^3  - 6w^2  - 3w + 1}} - \frac{{(6w^3  + 3w^2  - w)G_1^3 }}{{w^4  + 3w^3  - 6w^2  - 3w + 1}}\\
&\qquad - \frac{{(3w^3  - w^2 )G_2^3 }}{{w^4  + 3w^3  - 6w^2  - 3w + 1}} + \frac{{w^3 G_3^3 }}{{w^4  + 3w^3  - 6w^2  - 3w + 1}}\,.
\end{split}
\end{equation}
\begin{lemma}\label{lem.q25ynvb}
The rational function  
\[
A_n (w;m) = D^m A_n (w;0) = D^m \;\frac{1}{{\sum_{s = 0}^{n + 1} {\binom {n+1}s_F( - 1)^{\left\lceil {(n - s + 1)/2} \right\rceil } w^{n - s + 1} } }}
\]

satisfies the following recurrence relation
\[
\begin{split}
&A_n (w;m)\sum_{s = 0}^{n + 1} {\binom {n+1}s_F( - 1)^{\left\lceil {(n - s + 1)/2} \right\rceil } w^{n - s + 1} }\\
&\qquad= \delta _{m0}  - \sum_{j = 0}^{m - 1} {\binom mjA_n (w;j)\sum_{s = 0}^n {\binom {n+1}s_F( - 1)^{\left\lceil {(n - s + 1)/2} \right\rceil } (n - s + 1)^{m - j} w^{n - s + 1} } }\,.
\end{split}
\]

\end{lemma}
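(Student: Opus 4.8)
The plan is to imitate the proof of Lemma~\ref{lem.piatqhb} verbatim, since the present statement is nothing but its $G_j^n$ analogue. For brevity write the denominator as
\[
P_n(w)=\sum_{s=0}^{n+1}\binom{n+1}s_F(-1)^{\lceil(n-s+1)/2\rceil}w^{n-s+1},
\]
so that the definition of $A_n(w;0)$ is simply the statement $P_n(w)\,A_n(w;0)=1$. The first move is to apply the operator $D^m$ to both sides of this identity. On the right-hand side, $D^m 1=\delta_{m0}$ by the linearity and $D^0$ normalisation of the operator. On the left, I would expand by the Leibniz-type product rule (property~\ref{it.kor1u1b}) with $u=A_n(w;0)$ and $v=P_n(w)$, and then use the defining relation $A_n(w;r)=D^rA_n(w;0)$ from the statement to recognise $D^rA_n(w;0)=A_n(w;r)$. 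This produces
\[
\delta_{m0}=\sum_{r=0}^{m}\binom mr A_n(w;r)\,D^{m-r}P_n(w).
\]

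Next I would peel off the $r=m$ term, for which $D^{0}P_n(w)=P_n(w)$, and transpose the remaining terms to the right, obtaining
\[
A_n(w;m)\,P_n(w)=\delta_{m0}-\sum_{r=0}^{m-1}\binom mr A_n(w;r)\,D^{m-r}P_n(w).
\]
It then remains only to evaluate the derivatives $D^{m-r}P_n(w)$ for $0\le r\le m-1$. Since $P_n$ is a polynomial in $w$, property~\ref{it.ffasedb} applies term by term and gives $D^{m-r}w^{\,n-s+1}=(n-s+1)^{m-r}w^{\,n-s+1}$ for each $s$. Substituting this, renaming the summation index $r$ as $j$, and reading off the resulting expression reproduces the claimed recurrence.

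The single step that is not purely mechanical — and hence the place I would be most careful — is the treatment of the $s=n+1$ term of $P_n(w)$. There the exponent $n-s+1$ equals $0$, so the term is a constant and its contribution to $D^{m-r}P_n(w)$ carries the factor $0^{\,m-r}$. Because the outer sum runs only over $r\le m-1$, we always have $m-r\ge 1$, so this factor vanishes identically; this is precisely what allows the inner summation in the stated recurrence to be truncated from $s=0,\dots,n+1$ down to $s=0,\dots,n$. Once this observation is in place, the two displays above combine to give the lemma with no further effort, so the ``hard part'' is really just the bookkeeping of this upper limit rather than any genuine analytic difficulty.
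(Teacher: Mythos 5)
Your proposal is correct and follows essentially the same route as the paper: the paper likewise applies $D^m$ to $A_n(w;0)P_n(w)=1$ via the Leibniz property, the only cosmetic difference being that it isolates the constant ($s=n+1$) term of $P_n(w)$ as a Kronecker delta before differentiating, whereas you discard it afterwards by observing that $D^{m-r}$ kills constants for $m-r\ge 1$. Your care over the truncation of the inner sum from $s\le n+1$ to $s\le n$ is exactly the point the paper's $\delta_{jm}$ bookkeeping handles.
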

\begin{proof}
Write
\[
A_n (w;0)\left( {1 + \sum_{s = 0}^n {\binom {n+1}s_F( - 1)^{\left\lceil {(n - s + 1)/2} \right\rceil } w^{n - s + 1} } } \right) = 1\,.
\]
Apply $D^m$ to both sides to obtain
\[
\sum_{j = 0}^m {\binom mjD^j A_n (w;0)\left( {\delta _{jm}  + \sum_{s = 0}^n {\binom {n+1}s_F( - 1)^{\left\lceil {(n - s + 1)/2} \right\rceil } (n - s + 1)^{m - j} w^{n - s + 1} } } \right)}  = \delta _{m0}\,,
\]
from which the result follows.

\end{proof}

\begin{thm}\label{thm.main}
If $k$, $r$ and $n$ are nonnegative integers and $w$ is a real or complex parameter, then
\[
\begin{split}
S_k{}^n (w,r) &= \sum_{j = 0}^k {w^j j^r G_j{}^n }\\  
&= A_n (w;r)G_0{}^n  + \sum_{m = 0}^r \binom rmA_n (w;m){\sum_{j = 1}^n {j^{r - m} w^jG_j{}^n } }\\
&\quad + \sum_{m = 0}^r {\binom rmA_n (w;m)\sum_{s = 0}^n {\binom {n+1}s_F( - 1)^{\left\lceil {(n - s + 1)/2} \right\rceil } \sum_{j = 0}^{s - 1} {(j + n - s + 1)^{r - m} w^{j + n - s + 1} G_j{}^n } } }\\ 
&\quad + \sum_{m = 0}^r {\binom rmA_n (w;m)\sum_{s = 0}^{n + 1} {\binom {n+1}s_F( - 1)^{\left\lceil {(n - s + 1)/2} \right\rceil } \sum_{j = k + 1}^{k + s} {(j + n - s + 1)^{r - m} w^{j + n - s + 1} G_j{}^n } } }\,,
\end{split}
\]
where the functions $A_n(w,m)$ are given recursively by
\[
\begin{split}
&A_n (w;m)\sum_{s = 0}^{n + 1} {\binom {n+1}s_F( - 1)^{\left\lceil {(n - s + 1)/2} \right\rceil } w^{n - s + 1} }\\
&\qquad= \delta _{m0}  - \sum_{j = 0}^{m - 1} {\binom mjA_n (w;j)\sum_{s = 0}^n {\binom {n+1}s_F( - 1)^{\left\lceil {(n - s + 1)/2} \right\rceil } (n - s + 1)^{m - j} w^{n - s + 1} } }\,,
\end{split}
\]
for $m=0,1,2,\ldots,r$.
\end{thm}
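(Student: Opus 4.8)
The plan is to mirror the proof of Theorem~\ref{thm.lou2j9u}: apply the operator $D^r$ to the closed form for $S_k^n(w,0)$ supplied by Lemma~\ref{lem.tspbkzv}, and then read off the four groups of terms in the claimed identity. First I would write Lemma~\ref{lem.tspbkzv} in the factored form $S_k^n(w,0)=A_n(w;0)\,N(w)$, where $A_n(w;0)$ is the reciprocal of the denominator ${\sum_{s=0}^{n+1}{\binom{n+1}{s}}_F(-1)^{\lceil(n-s+1)/2\rceil}w^{n-s+1}}$ and $N(w)$ is the numerator, i.e.\ the difference of the ``initial'' double sum (inner index $j$ running from $0$ to $s-1$) and the ``boundary'' double sum (inner index $j$ running from $k+1$ to $k+s$). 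Applying $D^r$ to the left-hand side and invoking property~\ref{it.xvq5fhg} immediately gives $D^rS_k^n(w,0)=S_k^n(w,r)$, the quantity we want.

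Next I would apply $D^r$ to the right-hand side using the Leibniz rule (property~\ref{it.kor1u1b}),
\[
D^r\bigl(A_n(w;0)\,N(w)\bigr)=\sum_{m=0}^r\binom rm\bigl(D^mA_n(w;0)\bigr)\bigl(D^{r-m}N(w)\bigr)=\sum_{m=0}^r\binom rm A_n(w;m)\,D^{r-m}N(w),
\]
where the last equality is the definition of $A_n(w;m)$ from Lemma~\ref{lem.q25ynvb}. It then remains to evaluate $D^{r-m}N(w)$. Since $G_j{}^n$ does not depend on $w$, each summand of $N(w)$ is a constant multiple of a single power $w^{j+n-s+1}$, so property~\ref{it.ffasedb} applies term by term: a monomial with exponent $j+n-s+1\ge1$ is sent to $(j+n-s+1)^{r-m}w^{j+n-s+1}$, while the unique constant summand (the $w^0$ term, which occurs only for $s=n+1,\ j=0$ in the initial sum and equals $G_0{}^n$) is annihilated unless $r=m$, in which case it survives as $G_0{}^n$.

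The bookkeeping then produces exactly the four groups in the statement. The constant summand contributes only through $m=r$ and yields the isolated term $A_n(w;r)G_0{}^n$. Splitting off the $s=n+1$ slice of the initial sum, whose exponents are $w^j$ with $0\le j\le n$, separates this same $G_0{}^n$ constant from the tail $\sum_{j=1}^n j^{r-m}w^jG_j{}^n$ (after $D^{r-m}$), giving the second group; the remaining slices $0\le s\le n$ of the initial sum give the third group, and the boundary double sum gives the fourth group, with the sign it inherits from the minus in Lemma~\ref{lem.tspbkzv}. Finally, the recursion for the $A_n(w;m)$ is quoted verbatim from Lemma~\ref{lem.q25ynvb}.

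The main obstacle is organizational rather than conceptual: one must handle the lone $w^0$ term correctly through the Kronecker-delta clause of property~\ref{it.ffasedb}, and one must carefully peel the $s=n+1$ term off the initial sum so as to isolate both $A_n(w;r)G_0{}^n$ and the finite sum $\sum_{j=1}^n$ from the rest. Keeping the signs $(-1)^{\lceil(n-s+1)/2\rceil}$, the Fibonomial weights, and the shifted exponents $j+n-s+1$ correctly aligned across all four groups is the only genuinely delicate part; no new identity beyond Lemmas~\ref{lem.tspbkzv} and~\ref{lem.q25ynvb} and the stated operator properties is needed.
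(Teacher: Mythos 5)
Your proposal is correct and is essentially the paper's own proof written out in full: the paper's argument is exactly ``apply $D^r$ to the identity of Lemma~\ref{lem.tspbkzv}, use the Leibniz rule (property~\ref{it.kor1u1b}), property~\ref{it.ffasedb} on the monomials $w^{j+n-s+1}$, property~\ref{it.xvq5fhg} on the left-hand side, and Lemma~\ref{lem.q25ynvb} for the $A_n(w;m)$,'' and your decomposition into the lone constant term, the $s=n+1$ slice of the initial sum, the remaining slices $0\le s\le n$, and the boundary sum is precisely the intended bookkeeping. One caveat: your derivation (correctly) gives the boundary group the minus sign it inherits from Lemma~\ref{lem.tspbkzv}, whereas the theorem as printed attaches a plus sign to that group; checking the case $n=1$, $r=0$ against Lemma~\ref{lem.fxf1joy} (where the boundary sum works out to $w^{k+2}G_k+w^{k+1}G_{k+1}$, which must enter negatively) confirms that the minus sign is the right one. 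So you have in effect located a sign typo in the statement, and you should say so explicitly rather than simultaneously asserting that the sign is ``inherited from the minus'' and that the computation ``produces exactly the four groups in the statement.''
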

\begin{proof}
Apply $D^r$ to both sides of the identity of Lemma~\ref{lem.tspbkzv} and make use of Lemma~\ref{lem.q25ynvb} and the properties~\ref{it.kor1u1b}, \ref{it.ffasedb} and \ref{it.xvq5fhg} of the $D$ operator.
\end{proof}

\end{document}